\numberwithin{equation}{section}
\theoremstyle{plain}
\newtheorem{thm}{Theorem}[section]
\newtheorem{lemma}[thm]{Lemma}
\theoremstyle{definition}
\newtheorem{rem}[thm]{Remark}
\newcommand{\be}{\begin{equation}}
\newcommand{\ee}{\end{equation}}
\def\R{{\mathbb R}}
\def\C{{\mathbb C}}
\def\S3{{{\mathbb S}^3}}
\def\SU2{{{\rm SU}(2)}}
\def\Rn{{{\mathbb R}^n}}
\def\Tn{{{\mathbb T}^n}}
\def\Gh{{\widehat{G}}}
\def\Lap{{\mathcal L}}
\def\K{{\mathscr K}}
\def\L{{\mathscr L}}
\def\HS{{\mathtt{HS}}}
\def\p#1{{\left({#1}\right)}}
\def\jp#1{{\left\langle{#1}\right\rangle}}
\def\Dcal{{\mathcal D}}
\DeclareMathOperator{\Tr}{Tr}
\DeclareMathOperator{\rank}{rank}
\def\C{{\mathbb C}}
\def\Rn{{\mathbb R}^n}
\def\R2n{{\mathbb R}^{2n}}
\def\S{{\mathcal S}}
\def\Rn{{\mathbb R}^n}
\def\C{{\mathbb C}}
\def\R2{{\mathbb R}^2}
\def\R2n{{\mathbb R}^{2n}}
\def\S{{\mathcal S}}
\begin{document}

\title[Gohberg lemma and essential spectrum on compact Lie groups]
{Gohberg lemma, compactness, and essential spectrum of operators on compact Lie groups}

\author[Aparajita Dasgupta]{Aparajita Dasgupta}
\address{
  Aparajita Dasgupta:
  \endgraf
  Department of Mathematics
  \endgraf
  Imperial College London
  \endgraf
  180 Queen's Gate, London SW7 2AZ 
  \endgraf
  United Kingdom
  \endgraf
  {\it E-mail address} {\rm a.dasgupta@imperial.ac.uk}
  }

\author[Michael Ruzhansky]{Michael Ruzhansky}
\address{
  Michael Ruzhansky:
  \endgraf
  Department of Mathematics
  \endgraf
  Imperial College London
  \endgraf
  180 Queen's Gate, London SW7 2AZ 
  \endgraf
  United Kingdom
  \endgraf
  {\it E-mail address} {\rm m.ruzhansky@imperial.ac.uk}
  }

\thanks{The first author was supported by the Grace-Chisholm Young Fellowship of the 
London Mathematical Society. The second
 author was supported by the EPSRC
 Leadership Fellowship EP/G007233/1 and by EPSRC Grant EP/K039407/1.}
\date{\today}

\subjclass{Primary 43A77, 47G30; Secondary 22C05, 47A53}
\keywords{Gohberg lemma, essential spectrum,
pseudo-differential operators, compact Lie groups}

\begin{abstract}
In this paper we prove a version of the Gohberg lemma on compact Lie groups
giving an estimate from below for the distance from a given operator to the set
of compact operators on compact Lie groups. As a consequence, we prove
several results on bounds for the essential spectrum and a criterion for
an operator to be compact. The conditions are
given in terms of the matrix-valued symbols of operators.
\end{abstract}

\maketitle

\section{Introduction}

In this paper we establish a version of the Gohberg lemma in the setting of
compact Lie groups and apply it to study the compactness of 
pseudo-differential operators and give bounds for their essential spectrum.
The original Gohberg lemma has been obtained by Gohberg 
\cite{Gohberg:lemma-1960} in the investigation of integral operators,
and its version on the unit circle $\mathbb T^{1}$ has been recently obtained
by \cite{Molahajloo-Wong:eFredholmness-2010}, with application to
the spectral properties of operators, see 
\cite{Molahajloo:torus-compact-2011,Pirhayati:ess-spectrum-2011}.
In this paper we establish the Gohberg lemma on general compact Lie
groups using the matrix quantization of operators
developed in \cite{ruzhansky+turunen-book,Ruzhansky+Turunen-IMRN}.
In particular, we give estimates for the distance from a given operator to
the set of compact operators as well as for the essential spectrum of
the operator in terms of some quantities associated to the matrix
symbols. The results contain the corresponding results obtained in
\cite{Molahajloo:torus-compact-2011,Pirhayati:ess-spectrum-2011}
on the unit circle. The matrix-valued symbols have been quite useful in
other studies of compactness of operators
in cases when 
conditions on the kernel are less effective, for example by providing
criteria for operators to belong to Schatten classes, see 
\cite{Delgado-Ruzhansky:Schatten},
and criteria for nuclearity in $L^{p}$-spaces, see
\cite{Delgado-Ruzhansky:Lp-nuclearity}.

In Section \ref{SEC:Prelim} we briefly recall the necessary notions of the
Fourier analysis on compact Lie groups and of the matrix quantization
of operators. In Section \ref{SEC:Results} we state our results.
In Section \ref{SEC:proof1} we prove the Gohberg lemma given in
Theorem \ref{THM:Gohberg}, and in
Section \ref{SEC:other-proofs} we prove an application of the 
Gohberg lemma given in Theorem \ref{THM:Thmpm}.

\section{Fourier analysis and matrix symbols on compact Lie groups}
\label{SEC:Prelim}

Let $G$ be a compact Lie group with the unit element $e$,
and let $\Gh$ be its unitary dual, consisting of the
equivalence classes $[\xi]$ of the continuous irreducible unitary representations
$\xi:G\to \C^{d_{\xi}\times d_{\xi}}$ of dimension $d_{\xi}$. For a function
$f\in C^{\infty}(G)$ we can define its Fourier coefficient at $\xi$ by
$$
\widehat{f}(\xi):=\int_{G} f(x) \xi(x)^{*}dx\in\C^{d_{\xi}\times d_{\xi}},
$$
where the integral is (always) taken with respect to the Haar measure on $G$.
The Fourier series becomes
$$
f(x)=\sum_{[\xi]\in\Gh} d_{\xi} \Tr\p{\xi(x)\widehat{f}(\xi)},
$$
with the Plancherel's identity taking the form
\begin{equation}\label{EQ:Plancherel}
\|f\|_{L^{2}(G)}=\p{\sum_{[\xi]\in\Gh} d_{\xi} \|\widehat{f}(\xi)\|_{\HS}^{2}}^{1/2}=:
\|\widehat{f}\|_{\ell^{2}(\Gh)},
\end{equation}
which we take as the definition of the norm on the Hilbert space
$\ell^{2}(\Gh)$, and where 
$ \|\widehat{f}(\xi)\|_{\HS}^{2}=\Tr(\widehat{f}(\xi)\widehat{f}(\xi)^{*})$ is the
Hilbert--Schmidt norm of the matrix $\widehat{f}(\xi)$.

Given an operator $T:C^{\infty}(G)\to C^{\infty}(G)$
(or even $T:C^{\infty}(G)\to \Dcal'(G)$), we define its matrix symbol by
$\sigma_{T}(x,\xi):=\xi(x)^{*} (T\xi)(x)\in \C^{d_{\xi}\times d_{\xi}}$, where
$T\xi$ means that we apply $T$ to the matrix components of $\xi(x)$.
In this case we can prove that 
\begin{equation}\label{EQ:T-op}
Tf(x)=\sum_{[\xi]\in\Gh} d_{\xi} \Tr\p{\xi(x)\sigma_{T}(x,\xi)\widehat{f}(\xi)}.
\end{equation}
The correspondence between operators and symbols is one-to-one, and
we will write $T_{\sigma}$ for the operator given by 
\eqref{EQ:T-op} corresponding to the symbol $\sigma(x,\xi)$.
The quantization \eqref{EQ:T-op} has been extensively studied in
\cite{ruzhansky+turunen-book,Ruzhansky+Turunen-IMRN}, to which we
refer for its properties and for the corresponding symbolic calculus.

We note that the matrix components of $\xi(x)$ are the eigenfunctions of the
Laplacian (Casimir element) $\Lap$ on $G$ corresponding to one eigenvalue which we
denote by $\lambda_{\xi}^{2}$, i.e. we have
$\Lap \xi(x)_{ij}=-\lambda_{\xi}^{2}\xi(x)_{ij}$ for all $1\leq i,j\leq d_{\xi}$.
We denote $\jp{\xi}:=(1+\lambda_{\xi}^2)^{1/2}.$

We now briefly describe the class $\Psi^{0}(G)$
of H\"ormander's pseudo-differential operators on $G$
in terms of the matrix symbols. Here, $\Psi^{0}(G)$ stands for the
usual class of operators that have symbols in H\"ormander's class
$S^{0}_{1,0}(\Rn)$ in every local coordinate system.

It was proved in \cite{Ruzhansky-Turunen-Wirth:arxiv} that $T\in \Psi^{0}(G)$ is equivalent 
to the condition that its matrix-valued symbol $\sigma$ satisfies
\begin{equation}\label{EQ:symbol-class}
\|\partial_{x}^{\beta}\Delta_{\xi}^{\alpha}\sigma(x,\xi)\|_{op}\leq C_{\alpha\beta}
\jp{\xi}^{-|\alpha|}
\end{equation}
for all $x\in G$ and $[\xi]\in\Gh$, and for all $\alpha,\beta$,
where $\|\cdot\|_{op}$ stands for the operator norm of the matrix
multiplication.
The difference operators
$\Delta_{\xi}^{\alpha}$ in \eqref{EQ:symbol-class} are defined as follows.
Let $q_1,\ldots,q_m\in C^{\infty}(G)$ be such that $q_{j}(e)=0$,
$\nabla q_{j}(e)\not=0$, for all $1\leq j\leq m$, 
the unit element $e$ is the only common zero of
the family $\{q_{j}\}_{j=1}^{m}$, and such that
$\rank\{\nabla q_{1}(e),\cdots,\nabla q_{m}(e)\}=\dim G$.
We call such a collection strongly admissible.
Then we set $\Delta_{q_{j}}\widehat f (\xi):=\widehat{q_j f}(\xi)$ 
and 
$\Delta^{\alpha}_{\xi}:=\Delta_{q_{1}}^{\alpha_{1}}\cdots \Delta_{q_{m}}^{\alpha_{m}}.$
We refer to \cite{ruzhansky+turunen-book} and especially to
\cite{Ruzhansky-Turunen-Wirth:arxiv} for the analysis of such difference operators.

It was shown in \cite{Ruzhansky-Turunen-Wirth:arxiv}
that the operator $T\in \Psi^{0}(G)$ is elliptic if and only if 
its matrix symbol $\sigma(x,\xi)$ is invertible for all but finitely many
$[\xi]\in\Gh$, and for all such $\xi$ we have
\begin{equation}\label{EQ:ellipticity}
\|\sigma(x,\xi)^{-1}\|_{op}\leq C
\end{equation}
for all $x\in G$.

\section{Gohberg lemma and applications}
\label{SEC:Results}
  
We define $\|\sigma(x,\xi)\sigma(x,\xi)^{*}\|_{\min}$ to be the smallest eigenvalue of 
the positive matrix $\sigma(x,\xi)\sigma(x,\xi)^{*},$ that is, if 
$\lambda_1(x,\xi), \lambda_2(x,\xi),\ldots,\lambda_{d_{\xi}}(x,\xi)\geq 0$ are the eigenvalues of 
$\sigma(x,\xi)\sigma(x,\xi)^{*}$
then we set
$$
\|\sigma(x,\xi)\sigma(x,\xi)^{*}\|_{\min}:= \min_{1\leq i\leq d_{\xi}}\lambda_i(x,\xi).
$$
We formulate a version of the Gohberg Lemma first for operators in the 
H\"ormander class $\Psi^0$ to relate it with the well-known theory and to 
be used in the application in Theorem \ref{THM:Thmpm}, but later, in
Remark \ref{REM:Gohberg}, we note that the result remains valid for
a much more general class of operators.

\begin{thm}[Gohberg Lemma]\label{THM:Gohberg}
Let $\sigma(x,\xi)$ be the matrix symbol of $T_{\sigma}\in \Psi^{0}(G)$. 
Then for all compact operators $K$ on $L^{2}(G)$, we have 
\begin{equation}\label{EQ:Gohberg-est}
\|T_{\sigma}-K\|_{\mathscr L(L^{2}(G))}\geq d_{\min},
\end{equation}
where 
$$
d_{\min}:=\limsup_{\jp{\xi}\rightarrow\infty}
\left\{\sup_{x\in G}\frac{\|\sigma(x,\xi)\sigma(x,\xi)^{*}\|_{\min}}
{\|\sigma(x,\xi)\|_{op}}\right\}.
$$
\end{thm}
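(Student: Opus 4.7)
The plan is to exhibit, for each $d<d_{\min}$, a sequence $(f_n)\subset L^2(G)$ with $\|f_n\|_{L^2}=1$, $f_n\rightharpoonup 0$ weakly in $L^2(G)$, and $\liminf_n\|T_\sigma f_n\|_{L^2}\geq d$. Since any compact operator $K$ on $L^2(G)$ sends weakly null sequences to norm-null ones, the existence of such $(f_n)$ forces $\|T_\sigma-K\|_{\mathscr{L}(L^2(G))}\geq\liminf_n\|(T_\sigma-K)f_n\|_{L^2}\geq d$, and letting $d\uparrow d_{\min}$ yields \eqref{EQ:Gohberg-est}.

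I would build the witness sequence from polarised wave packets. Using the definition of $d_{\min}$ as a limsup, I would first select representations $[\xi_n]\in\Gh$ with $\jp{\xi_n}\to\infty$, points $x_n\in G$, and unit eigenvectors $w_n\in\mathbb{C}^{d_{\xi_n}}$ of $\sigma(x_n,\xi_n)\sigma(x_n,\xi_n)^*$ at its smallest eigenvalue $\mu_n:=\|\sigma(x_n,\xi_n)\sigma(x_n,\xi_n)^*\|_{\min}$, with $\mu_n\geq d\,\|\sigma(x_n,\xi_n)\|_{op}$. Fixing a smooth bump $\phi\in C^{\infty}(G)$ supported in a small neighbourhood of the identity with $\|\phi\|_{L^2}=1$, the wave packet
$$g_n(x):=\sqrt{d_{\xi_n}}\,\phi(x_n^{-1}x)\,\bigl(\xi_n(x)w_n\bigr)_{1}$$
is concentrated at $x_n$ in space and at $[\xi_n]$ in frequency, satisfies $g_n\rightharpoonup 0$ (since $\jp{\xi_n}\to\infty$), and has $\|g_n\|_{L^2}$ bounded above and away from $0$. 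I would then take $f_n:=T_\sigma^* g_n/\|T_\sigma^* g_n\|_{L^2}$: boundedness of $T_\sigma^*$ gives $T_\sigma^* g_n\rightharpoonup 0$, and hence $f_n\rightharpoonup 0$ provided $\|T_\sigma^* g_n\|_{L^2}$ is bounded below, which will follow from the calculation below.

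To get the lower bound I would write
$$\|T_\sigma f_n\|_{L^2}=\frac{\|T_\sigma T_\sigma^* g_n\|_{L^2}}{\|T_\sigma^* g_n\|_{L^2}}$$
and use that the principal symbols of $T_\sigma^*$ and $T_\sigma T_\sigma^*$ are $\sigma(x,\xi)^*$ and $\sigma(x,\xi)\sigma(x,\xi)^*$, respectively. Taylor-expanding $\sigma(\cdot,\xi_n)$ around $x_n$ on the narrow support of $\phi(x_n^{-1}\cdot)$ and invoking the symbolic calculus of \cite{Ruzhansky-Turunen-Wirth:arxiv}, I would reduce the action of $T_\sigma^*$ and $T_\sigma T_\sigma^*$ on $g_n$ to multiplication by the frozen matrices $\sigma(x_n,\xi_n)^*$ and $\sigma(x_n,\xi_n)\sigma(x_n,\xi_n)^*$ applied to the polarisation $w_n$, with $L^2$ remainders that vanish as $\jp{\xi_n}\to\infty$ by the $\jp{\xi}^{-|\alpha|}$ decay in \eqref{EQ:symbol-class}. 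Coupled with the choice of $w_n$ as the smallest eigenvector, this would yield $\|T_\sigma T_\sigma^* g_n\|_{L^2}\geq\mu_n\|g_n\|_{L^2}-o(1)$ and $\|T_\sigma^* g_n\|_{L^2}\leq\|\sigma(x_n,\xi_n)\|_{op}\|g_n\|_{L^2}+o(1)$, giving $\|T_\sigma f_n\|_{L^2}\geq \mu_n/\|\sigma(x_n,\xi_n)\|_{op}-o(1)\geq d-o(1)$, as desired.

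The hard part will be the frozen-coefficient step: rigorously identifying, with $L^2$-vanishing remainder, the action \eqref{EQ:T-op} of $T_\sigma$, $T_\sigma^*$, and $T_\sigma T_\sigma^*$ on the narrowly localised wave packet $g_n$ with multiplication by the symbol value at $(x_n,\xi_n)$. This will rely on the non-commutative symbolic calculus of \cite{Ruzhansky-Turunen-Wirth:arxiv} and on controlling the commutator between $T_\sigma$ and the spatial cut-off $\phi(x_n^{-1}\cdot)$, where the negative-order gain in $\jp{\xi}$ furnished by each difference operator $\Delta_\xi^\alpha$ absorbs the error terms and produces the $o(1)$ quantitative losses in the estimates above.
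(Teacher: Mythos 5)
Your high-level skeleton matches the paper's: build a weakly null sequence concentrated at $(x_n,\xi_n)$ realising the limsup in $d_{\min}$, kill $K$ by compactness, and use a frozen-coefficient approximation to make $T_\sigma$ act by the matrix $\sigma(x_n,\xi_n)$. However, there is a genuine gap in your construction of the wave packet, and it propagates through the whole argument.

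The issue is the normalisation of the scalar wave packet $g_n(x)=\sqrt{d_{\xi_n}}\,\phi(x_n^{-1}x)(\xi_n(x)w_n)_1$. You assert $\|g_n\|_{L^2}$ is bounded above and away from zero, but this is not justified and is, in general, false. The function $(\xi_n(x)w_n)_1$ has $\|(\xi_n(\cdot)w_n)_1\|_{L^2(G)}=d_{\xi_n}^{-1/2}$ by Peter--Weyl, but its modulus is \emph{not} approximately constant on the small support of $\phi(x_n^{-1}\cdot)$. Writing $\|g_n\|_{L^2}^2=w_n^*Q_nw_n$ with $(Q_n)_{kl}:=d_{\xi_n}\int_G|\phi(x_n^{-1}x)|^2\overline{\xi_n(x)_{1k}}\,\xi_n(x)_{1l}\,dx$, one sees $\Tr Q_n=d_{\xi_n}$, but $Q_n$ is \emph{not} close to the identity when $\phi$ is a narrow bump (the rank-one matrices $\xi_n(x)^*e_1e_1^T\xi_n(x)$ averaged over a tiny set need not be equidistributed). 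So $\|g_n\|$ can degenerate with $n$, and the same problem recurs after applying $T_\sigma^*$: the ratio $\|T_\sigma T_\sigma^*g_n\|/\|T_\sigma^*g_n\|$ is \emph{not} controlled by $\mu_n/\|\sigma(x_n,\xi_n)\|_{op}$ just because the approximate polarisations are $\mu_n w_n$ and $\sigma(x_n,\xi_n)^*w_n$, because the $L^2$-norm of a wave packet with polarisation $v$ is not proportional to $|v|$. (On $\Tn$ this proportionality does hold, which is why the scalar wave-packet picture works there.)

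The paper sidesteps exactly this difficulty by working with the \emph{matrix-valued} test functions $u_{\xi_n}(x)=d_{\xi_n}^{-1/2}\xi_n(x)u(x\cdot x_{\xi_n}^{-1})$ with a fixed scalar bump $u$. Since $\|\xi_n(x)\|_{\HS}^2=d_{\xi_n}$ for every $x$, one gets the exact identity $\|u_{\xi_n}\|_{L^2(G)}=\|u\|_{L^2(G)}$, removing the normalisation problem entirely. The frozen-coefficient step is then a lemma about $T_\sigma$ itself (Lemma \ref{LEM:Tsigma-u}, proved by Taylor expansion of $u_{\xi_n}(xz^{-1})$ in $z$ using the factorisation $\xi_n(xz^{-1})=\xi_n(x)\xi_n(z)^*$ together with the kernel $R(x,z)$), with no detour through $T_\sigma^*$ or $T_\sigma T_\sigma^*$. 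The eigenvalue enters not through an explicit eigenvector $w_n$ but through the pointwise linear-algebra bound $\|u_{\xi_n}(x)\,\sigma(x_n,\xi_n)\sigma(x_n,\xi_n)^*\|_{\HS}\geq\lambda_{\min}\,\|u_{\xi_n}(x)\|_{\HS}$ for right multiplication by a positive matrix, after which a single division by $\|\sigma(x_n,\xi_n)\|_{op}$ produces the desired ratio. If you want to keep the polarisation idea, you would need to replace your scalar $g_n$ by something with a pointwise Hilbert--Schmidt (rather than entry-wise) normalisation; as it stands the proof does not close.
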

We note that $d_{\min}$ is well-defined. Indeed, we have
$$
\|\sigma(x,\xi)\sigma(x,\xi)^{*}\|_{\min}\leq 
\|\sigma(x,\xi)\sigma(x,\xi)^{*}\|_{op}\leq
\|\sigma(x,\xi)\|_{op}^{2},
$$
implying that 
\begin{equation}\label{EQ:dmin-est}
d_{\min}\leq \limsup_{\jp{\xi}\to\infty}\{\sup_{x\in G}\|\sigma(x,\xi)\|_{op}\}<\infty
\end{equation}
in view of \eqref{EQ:symbol-class} with $\alpha=\beta=0$.

We note again that the condition $T_{\sigma}\in \Psi^{0}(G)$ in
Theorem \ref{THM:Gohberg} can be substantially relaxed,
see Remark \ref{REM:Gohberg}.

\medskip
To formulate an application of the Gohberg lemma, let us first 
introduce some notation.
Let $A:X\rightarrow X$ be a closed linear operator with dense domain $D(A)$ in the complex Banach space $X.$ Then the spectrum $\Sigma(A)$ of $A$  is denoted by
$
\Sigma(A):={\mathbb{C}}\backslash\Phi(A),
$ 
where $\Phi(A)$ is the resolvent set of $A$ 
given by 
$$
\Phi(A)=\{\lambda\in\mathbb{C}: A-\lambda I \textrm{ is bijective}\}.
$$
The essential spectrum $\Sigma_{ess}(A)$ of $A$ is 
defined by $\Sigma_{ess}(A):=\mathbb{C}\backslash\Phi_{ess}(A),$ where 
$$
\Phi_{ess}(A)=\left\{\lambda\in\mathbb{C}: A-\lambda I
 \textrm{ is Fredholm and } i(A-\lambda I)=
0\right\}.
$$

\begin{thm} \label{PROP:1}\label{THM:Thmpm}
Let $\sigma$ be the matrix symbol of a pseudo-differential operator $T_{\sigma}\in\Psi^{0}(G).$
Let 
$$
d_{\max}:=\limsup_{\jp{\xi}\rightarrow \infty}\{\sup_{x\in G}\|\sigma(x,\xi)\|_{op}\}.
$$
Then for $T_{\sigma}$ on $L^{2}(G)$ we have 
\begin{equation}\label{EQ:ess}
\Sigma_{ess}(T_{\sigma})\subseteq \left\{\lambda\in{\mathbb {C}}: |\lambda|\leq d_{\max} \right\}.
\end{equation} 
Moreover, if $d_{\max}=0$, then $T_{\sigma}$ is a compact operator on $L^{2}(G)$.
\end{thm}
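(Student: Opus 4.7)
The plan is to derive the inclusion \eqref{EQ:ess} from the ellipticity criterion \eqref{EQ:ellipticity}, and then obtain the compactness assertion by applying \eqref{EQ:ess} to the self-adjoint operator $T_\sigma^*T_\sigma$. Both steps bypass the Gohberg Lemma \ref{THM:Gohberg} itself and rest on the symbolic calculus of the matrix quantization.

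For \eqref{EQ:ess}, I would fix $\lambda\in\mathbb{C}$ with $|\lambda|>d_{\max}$ and aim to show $\lambda\in\Phi_{ess}(T_\sigma)$. The definition of $d_{\max}$ gives an $R>0$ such that $\sup_x\|\sigma(x,\xi)\|_{op}<|\lambda|$ for all $\jp\xi>R$, and for these $\xi$ a Neumann series shows that $\sigma(x,\xi)-\lambda I$ is invertible, with operator norm of the inverse bounded by $(|\lambda|-\sup_x\|\sigma(x,\xi)\|_{op})^{-1}$ uniformly in $x$. Since $\sigma(x,\xi)-\lambda I$ is the matrix symbol of $T_\sigma-\lambda I$, the criterion \eqref{EQ:ellipticity} then implies $T_\sigma-\lambda I$ is elliptic in $\Psi^0(G)$ and hence Fredholm on $L^2(G)$. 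To verify $i(T_\sigma-\lambda I)=0$, I would invoke a connectedness argument: the map $\lambda\mapsto T_\sigma-\lambda I$ is continuous in $\mathscr L(L^2(G))$, the Fredholm index is locally constant on the open set of Fredholm operators, and the region $U=\{\lambda\in\mathbb{C}:|\lambda|>d_{\max}\}$ is connected; for $|\lambda|>\|T_\sigma\|_{\mathscr L(L^2(G))}$ the operator $T_\sigma-\lambda I$ is invertible by a second Neumann series, hence of index zero, and by connectedness this persists throughout $U$.

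For the compactness assertion when $d_{\max}=0$, I would apply \eqref{EQ:ess} to $T_\sigma^*T_\sigma\in\Psi^0(G)$. The symbolic calculus of \cite{ruzhansky+turunen-book,Ruzhansky+Turunen-IMRN} gives that the matrix symbol of $T_\sigma^*T_\sigma$ equals $\sigma(x,\xi)^*\sigma(x,\xi)$ plus a remainder of order $-1$, whose operator norm tends to $0$ as $\jp\xi\to\infty$; combined with $\|\sigma^*\sigma\|_{op}=\|\sigma\|_{op}^{2}$, this yields $d_{\max}(T_\sigma^*T_\sigma)=d_{\max}(\sigma)^{2}=0$, and \eqref{EQ:ess} applied to $T_\sigma^*T_\sigma$ gives $\Sigma_{ess}(T_\sigma^*T_\sigma)\subseteq\{0\}$. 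Since $T_\sigma^*T_\sigma$ is self-adjoint, Fredholmness automatically forces index zero, so $\Sigma_{ess}(T_\sigma^*T_\sigma)$ coincides with the spectrum of the class $[T_\sigma^*T_\sigma]$ in the Calkin algebra $\mathscr L(L^2(G))/\mathscr K(L^2(G))$. This class is self-adjoint, so in the Calkin C$^{*}$-algebra its norm equals its spectral radius, producing $\inf_{K\in\mathscr K(L^2(G))}\|T_\sigma^*T_\sigma-K\|_{\mathscr L(L^2(G))}=0$. The C$^{*}$-identity in the Calkin algebra then gives $\bigl(\inf_K\|T_\sigma-K\|_{\mathscr L(L^2(G))}\bigr)^{2}=\inf_K\|T_\sigma^*T_\sigma-K\|_{\mathscr L(L^2(G))}=0$, so $T_\sigma$ is compact on $L^2(G)$.

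The main technical point is confirming the symbolic-calculus statement that the principal matrix symbol of $T_\sigma^*T_\sigma$ is $\sigma^*\sigma$ modulo a term vanishing at infinity in operator norm, so that $d_{\max}$ depends only on the leading part; once that is in hand, compactness reduces to the elementary C$^{*}$-identity $\|a\|^{2}=\|a^{*}a\|$ in the Calkin algebra, and the essential-spectrum inclusion is just an index-continuity argument inside the Fredholm region determined by \eqref{EQ:ellipticity}.
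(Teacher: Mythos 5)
Your argument for the inclusion \eqref{EQ:ess} is essentially the paper's: Neumann series for the symbol gives uniform invertibility of $\sigma(x,\xi)-\lambda I$ for $\jp\xi$ large, the ellipticity criterion \eqref{EQ:ellipticity} yields Fredholmness, and continuity of the index together with invertibility for $|\lambda|>\|T_\sigma\|_{\L(L^2(G))}$ forces index zero on the connected set $\{|\lambda|>d_{\max}\}$; the paper phrases this as ``$\Phi(T_\sigma)\cap\{|\lambda|>d_{\max}\}\neq\emptyset$'' rather than invoking a second Neumann series, but these are the same observation. Your proof of the compactness statement, however, takes a genuinely different route. The paper works directly with $T_\sigma$: it shows $T_\sigma$ is \emph{essentially normal} (since $T_\sigma T_\sigma^*-T_\sigma^*T_\sigma$ is of order $-1$, hence compact), so $[T_\sigma]$ is normal in the Calkin algebra, whence $\|[T_\sigma]\|_C=r([T_\sigma])$; then \eqref{EQ:ess} together with Atkinson's theorem gives $\Sigma([T_\sigma])\subseteq\{0\}$, so $\|[T_\sigma]\|_C=0$. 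You instead apply \eqref{EQ:ess} to the self-adjoint operator $T_\sigma^*T_\sigma$, using the composition calculus to see that its symbol is $\sigma^*\sigma$ plus an $O(\jp\xi^{-1})$ remainder, hence $d_{\max}(T_\sigma^*T_\sigma)=d_{\max}^2=0$; self-adjointness forces index zero so $\Sigma_{ess}(T_\sigma^*T_\sigma)=\Sigma([T_\sigma^*T_\sigma])\subseteq\{0\}$, the norm of the self-adjoint class $[T_\sigma^*T_\sigma]$ equals its spectral radius and so vanishes, and the C$^*$-identity $\|[T_\sigma]\|_C^2=\|[T_\sigma]^*[T_\sigma]\|_C$ finishes. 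Both routes rest on the same Calkin-algebra machinery and on the same input from the matrix symbolic calculus (that the order-$(-1)$ remainders are compact, equivalently have symbols vanishing in operator norm at infinity), so neither is materially shorter; your version has the small advantage of only using the spectral-radius identity for a self-adjoint rather than a general normal element, while the paper's has the small advantage of not needing the C$^*$-identity in the quotient. Either is a complete and correct argument.
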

We observe that it follows from Theorem \ref{THM:Gohberg} that
if $d_{\min}\not= 0$, then $T_{\sigma}$ is not compact, since otherwise
we could take $K=T_{\sigma}.$ 
In other words, if $T_{\sigma}$ is compact, then $d_{\min}=0$.
From this point of view, the converse to this is
given by the last statement of Theorem \ref{THM:Thmpm}.

We note that we always have $d_{\min}\leq d_{\max}$ in view of 
\eqref{EQ:dmin-est}. If $G=\Tn$ is the torus, we have 
$d_{\min}= d_{\max}$.


\section{Proof of Theorem \ref{THM:Gohberg}}
\label{SEC:proof1}

This section is devoted to the proof of Theorem \ref{THM:Gohberg}.

\medskip
First we observe that by \eqref{EQ:dmin-est},
$d_{\min}$ is well-defined, and hence
for every $[\xi]\in \Gh$ there exists $x_{\xi}\in G$ such that 
$$
\frac{\|\sigma(x_{\xi_n},\xi_n)\sigma(x_{\xi_n},\xi_n)^{*}\|_{\min}}
{\|\sigma(x_{\xi_n},\xi_n)\|_{op}}=
\sup_{x\in G}\frac{\|\sigma(x,\xi_n)\sigma(x,\xi_n)^{\ast}\|_{\min}}{\|\sigma(x,\xi_n)\|_{op}}.
$$
From the definition of $d_{\min}$ there exists a sequence 
$(x_{\xi_n}, \xi_{n})$ such that 
$\jp{\xi_n}\rightarrow \infty$ and $
\frac{\|\sigma(x_{\xi_n},\xi_n)\sigma(x_{\xi_n},\xi_n)^{\ast}\|_{\min}}
{\|\sigma(x_{\xi_n},\xi_n)\|_{op}}\rightarrow d_{\min}.$ 
Let $u\in L^{2}(G)$ be sufficiently smooth.
We define $u_{\xi_n}(x)\in {\mathbb{C}}^{d_{\xi}\times d_{\xi}}$ by
$$
u_{\xi_n}(x):=d_{\xi_{n}}^{-1/2}\xi_{n}(x)u(x\cdot x_{\xi_n}^{-1}).
$$ 
For a matrix-valued function $w=w(x)\in {\mathbb{C}}^{d_{\xi}\times d_{\xi}}$, we define
its $L^{2}$-matrix norm by
$$
 \|w\|_{L^{2}(G)}:=\p{\int_{G}\|w(x)\|_{\HS}^{2}dx}^{1/2}.
$$
Then  we have 
\begin{eqnarray}
\|u_{\xi_n}\|^{2}_{L^{2}(G)}&=&\int_{G}\|u_{\xi_n}(x)\|_{\HS}^{2}dx\nonumber\\
&=&
\int_{G}d_{\xi_{n}}^{-1}\|\xi_n(x)u(x\cdot x_{\xi_n}^{-1})\|_{\HS}^{2}dx\nonumber\\
&=& d_{\xi_{n}}^{-1}\|\xi_n\|^2_{\HS}\|u\|^{2}_{L^{2}(G)}\nonumber\\
&=& 
\|u\|^{2}_{L^{2}(G)}.
\end{eqnarray} 
Therefore, we have the equality 
\begin{equation}\label{EQ:L2s}
\|u_{\xi_n}\|_{L^{2}(G)}=\|u\|_{L^{2}(G)}=\|\widehat{u}\|_{\ell^2(\Gh)}.
\end{equation}
Let now $\phi\in C^{\infty}(G)$. Then, with $y=x\cdot x_{\xi_{n}}^{{-1}},$ we have 
\begin{eqnarray}
\int_{G}u_{\xi_n}(x)\phi(x)dx 
&=& 
d^{-1/2}_{\xi_n}\int_{G}{\xi_{n}(y)u(y)\phi(y \cdot x_{\xi_n})\xi_{n}(x_{\xi_n})dy}\nonumber\\
&=& 
d_{\xi_n}^{-1/2}\widehat{u\phi(\cdot x_{\xi_n})}(\xi_n^{\ast})\xi_{n}(x_{\xi_n}),
\end{eqnarray}
where $\xi_{n}^{*}(x)=\xi_{n}(x)^{*}.$
Since
$$
\|u\phi(\cdot x_{\xi_n})\|_{L^2}\leq C \|u\|_{L^2}
$$
with a constant $C$ independent of $x_{\xi_{n}}$, we have
$\widehat{u\phi(\cdot x_{\xi_n})}\in \ell^{2}(\Gh)$ uniformly in $x_{\xi_n}$.
Hence, from \eqref{EQ:L2s} and \eqref{EQ:Plancherel}, it follows that
$d_{\xi_n}\|\widehat{u\phi(\cdot x_{\xi_n})}(\xi_n^{\ast})\|^{2}_{\HS}\rightarrow 0$ as 
$\jp{\xi_n^{\ast}}\rightarrow \infty.$
This implies
\begin{eqnarray}
\|\int_G u_{\xi_n}(x)\phi(x)dx\|_{\HS}=
\|d^{-1/2}_{\xi_n}\widehat{u\phi(\cdot x_{\xi_n})}(\xi^{\ast}_n)\xi_{n}(x_{\xi_n})\|_{\HS}
&\leq 
\|\widehat{u\phi(\cdot x_{\xi_n})}(\xi^{\ast}_n)\|_{\HS},\nonumber
\end{eqnarray}
so that
$$
u_{\xi_n}\rightarrow 0 \textrm{ as } \jp{\xi_n}\rightarrow \infty
$$ 
weakly. 
Hence for a compact  operator $K$ we have  
$$
\|Ku_{\xi_n}\|_{L^{2}(G)}\rightarrow 0
$$ 
as $\jp{\xi_n}\rightarrow \infty.$ 
Then for any $\epsilon>0$ and sufficiently large $n$ we have  by compactness
\begin{eqnarray}
\|Ku_{\xi_n}\|_{L^{2}(G)}&\leq& \epsilon \|u_{\xi_n}\|_{L^{2}(G)}\nonumber\\
&=& \epsilon \|u\|_{L^{2}(G)},
\end{eqnarray}
where $u$ is fixed and
$\|Ku_{\xi_n}\|_{L^{2}(G)}=\left(\int_{G}\|Ku_{\xi_n}(x)\|_{\HS}^2dx\right)^{1/2}.$
We now define 
$$
T_{\sigma}u_{\xi_n}:=\left(T_{\sigma}(u_{\xi_n})_{ij}\right)_
{1\leq i,j\leq d_{\xi_n}}\in {\mathbb{C}}^{d_{\xi_{n}}\times d_{\xi_n}}
$$
by $T_{\sigma}$ acting on the components of the matrix-valued function
$u_{\xi_n}.$

\begin{lemma}\label{LEM:Tsigma-u}
We have
$\|u_{\xi_n}\sigma(\cdot,\xi_n)-T_{\sigma}u_{\xi_n}\|_{L^{2}(G)}\rightarrow 0$ 
as $\jp{\xi_n}\rightarrow \infty.$
\end{lemma}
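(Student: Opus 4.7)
The plan is to exploit the defining relation $T_{\sigma}\xi_n(x)=\xi_n(x)\sigma(x,\xi_n)$ and reduce the claim to a commutator estimate. Setting $v_n(x):=u(x\cdot x_{\xi_n}^{-1})\in C^{\infty}(G)$, so that $(u_{\xi_n})_{ij}(x)=d_{\xi_n}^{-1/2}\,v_n(x)\,\xi_n(x)_{ij}$, the entrywise identity
\begin{equation*}
T_{\sigma}(v_n\,\xi_n(\cdot)_{ij})(x)=v_n(x)\,T_{\sigma}(\xi_n(\cdot)_{ij})(x)+[T_{\sigma},M_{v_n}](\xi_n(\cdot)_{ij})(x),
\end{equation*}
where $M_{v_n}$ is multiplication by $v_n$, is just rearrangement of the commutator. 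By the definition of the matrix symbol we have $T_{\sigma}(\xi_n(\cdot)_{ij})(x)=\sum_{k}\xi_n(x)_{ik}\sigma(x,\xi_n)_{kj}$, so multiplying the first term by $d_{\xi_n}^{-1/2}$ recovers precisely the $(i,j)$-entry of $u_{\xi_n}(x)\sigma(x,\xi_n)$. This yields the crucial identity
\begin{equation*}
(T_{\sigma}u_{\xi_n})_{ij}(x)-\bigl(u_{\xi_n}(x)\sigma(x,\xi_n)\bigr)_{ij}=d_{\xi_n}^{-1/2}\,[T_{\sigma},M_{v_n}](\xi_n(\cdot)_{ij})(x).
\end{equation*}

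Next, I would invoke the global symbolic calculus developed in \cite{ruzhansky+turunen-book,Ruzhansky-Turunen-Wirth:arxiv}: since the matrix symbol of $M_{v_n}$ is $v_n(x)I$, the composition formula places $[T_{\sigma},M_{v_n}]\in\Psi^{-1}(G)$, with asymptotic symbol $\sum_{|\alpha|\ge 1}\frac{1}{\alpha!}(\Delta^{\alpha}\sigma)(x,\xi)\,\partial_{x}^{(\alpha)}v_n(x)$, whose leading term is of order $-1$ by \eqref{EQ:symbol-class}. Consequently $\|[T_{\sigma},M_{v_n}]f\|_{L^{2}(G)}\le C_n\,\|f\|_{H^{-1}(G)}$ for every $f$, and the constant $C_n$ is bounded uniformly in $n$: since $v_n$ is a right translate of the fixed smooth function $u$, all relevant left-invariant $C^{N}$- and Sobolev-seminorms of $v_n$ coincide with those of $u$, so only finitely many $n$-independent quantities enter the estimate for the commutator symbol.

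Finally, I would use the Peter--Weyl normalisation $\|\xi_n(\cdot)_{ij}\|_{L^{2}(G)}^{2}=d_{\xi_n}^{-1}$ together with $\Lap\xi_n(\cdot)_{ij}=-\lambda_{\xi_n}^{2}\xi_n(\cdot)_{ij}$ to obtain $\|\xi_n(\cdot)_{ij}\|_{H^{-1}(G)}^{2}=\jp{\xi_n}^{-2}d_{\xi_n}^{-1}$. Summing the $d_{\xi_n}^{2}$ entrywise bounds in the Hilbert--Schmidt matrix $L^{2}$-norm,
\begin{equation*}
\|u_{\xi_n}\sigma(\cdot,\xi_n)-T_{\sigma}u_{\xi_n}\|_{L^{2}(G)}^{2}\le d_{\xi_n}^{-1}\sum_{i,j=1}^{d_{\xi_n}}C^{2}\|\xi_n(\cdot)_{ij}\|_{H^{-1}}^{2}\le C^{2}\jp{\xi_n}^{-2},
\end{equation*}
which tends to zero as $\jp{\xi_n}\to\infty$. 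The main obstacle is the uniform-in-$n$ commutator bound: although essentially routine, it is the step that genuinely uses the full matrix-valued calculus and the invariance of smoothness under right translation, and it is what keeps the number of terms $d_{\xi_n}^{2}$ from spoiling the estimate.
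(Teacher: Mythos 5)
Your argument is correct, and it follows a genuinely different route from the paper's. You recast the error as a commutator, writing
$(T_{\sigma}u_{\xi_n})_{ij}-\bigl(u_{\xi_n}\sigma(\cdot,\xi_n)\bigr)_{ij}
=d_{\xi_n}^{-1/2}[T_{\sigma},M_{v_n}](\xi_n(\cdot)_{ij})$,
then invoke the global symbolic calculus of \cite{ruzhansky+turunen-book,Ruzhansky-Turunen-Wirth:arxiv} as a black box to place $[T_{\sigma},M_{v_n}]$ in $\Psi^{-1}(G)$ with $H^{-1}\to L^{2}$ norm uniform in $n$ (the uniformity is correctly justified: left-invariant differential operators commute with right translation, so all seminorms of $v_n=u(\cdot\,x_{\xi_n}^{-1})$ equal those of $u$), and finish with the eigenfunction identities $\|\xi_n(\cdot)_{ij}\|_{L^2}^2=d_{\xi_n}^{-1}$ and $\Lap\xi_n(\cdot)_{ij}=-\lambda_{\xi_n}^2\xi_n(\cdot)_{ij}$ to get $\|\xi_n(\cdot)_{ij}\|_{H^{-1}}^2=\jp{\xi_n}^{-2}d_{\xi_n}^{-1}$; the factors of $d_{\xi_n}$ cancel exactly as they must.

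The paper does not use the commutator framing at all. Instead it writes $T_{\sigma}u_{\xi_n}$ as an integral against the right-convolution kernel $R(x,z)=\sum_{[\xi]}d_{\xi}\Tr(\sigma(x,\xi)\xi(z))$, performs a Taylor expansion of $v_{\xi_n}(xz^{-1})$ in $z$ near $e$ using the strongly admissible functions $q_j$, and recognizes the resulting $z$-integrals as difference operators $\Delta_{q^{\alpha}}$ applied to $\sigma$; the leading term $I_1$ gives $u_{\xi_n}\sigma(\cdot,\xi_n)$, while $I_2,I_3$ are $O(\jp{\xi_n}^{-1})$ by \eqref{EQ:symbol-class}. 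This is, in effect, re-deriving by hand the $T_{\sigma}M_{v_n}$ portion of the composition formula you are citing, so the two proofs share the same underlying mechanism. What each buys: your route is shorter and conceptually cleaner (the error is manifestly a commutator, and the numerology $d_{\xi_n}^{-1}\cdot d_{\xi_n}^2\cdot\jp{\xi_n}^{-2}d_{\xi_n}^{-1}=\jp{\xi_n}^{-2}$ is transparent); the paper's hands-on expansion makes visible exactly which symbol bounds are actually used, which is what lets the authors observe in Remark \ref{REM:Gohberg} that only $\|\sigma\|_{op},\|\partial_x\sigma\|_{op}\le C$ and $\|\Delta_q\sigma\|_{op}\le C\jp{\xi}^{-\rho}$ are needed, well short of membership in $\Psi^{0}(G)$. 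Your proof, as written, does rely on the full $\Psi^0$-calculus to conclude $[T_{\sigma},M_{v_n}]\in\Psi^{-1}(G)$, so it proves the lemma as stated but would need modification to yield the relaxed hypotheses of the remark.
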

We postpone the proof of Lemma \ref{LEM:Tsigma-u} and continue with the
proof of Theorem \ref{THM:Gohberg}.

\medskip
Let us fix $u\in C^{\infty}(G)$ such that $u\not=0.$ 
Then for any $\epsilon> 0$ there exists $N(u)$ such that for any $n\geq N(u)$ 
we have 
\begin{equation}\label{EQ:pf-aux1}
\|u_{\xi_n}\sigma(\cdot,\xi_n)\|_{L^2(G)}-\|T_{\sigma}u_{\xi_n}\|_{L^{2}(G)}
\leq \epsilon\|u\|_{L^2(G)}
\end{equation}
for sufficiently large $\jp{\xi_n}.$ 
Now since $\sigma$ satisfies \eqref{EQ:symbol-class} with $\alpha=0$, its
$x$-derivatives are uniformly bounded, and hence for $\epsilon>0$ 
there exists an open neighbourhood $V$ of the unit $e$ of the group 
such that for all $x\cdot x_{\xi_{n}}^{-1}\in V\subseteq G$ we have
$$
\|\sigma(x,\xi_n)-\sigma(x_{\xi_n},\xi_n)\|_{op}<\epsilon.
$$
Let now $u\in C^{\infty}(G)$ be such that $u(x)=0$ for all $x\notin V.$ 
Then $u_{\xi_n}(x)=0$ for all $x\not\in x_{\xi_n}V$, i.e. for 
$x\cdot x_{\xi_n}^{-1}\notin V.$ 
Then
\begin{eqnarray}\label{EQ:aux1}
&&  
\frac{\|u_{\xi_n}\sigma(x_{\xi_n},\xi_n)\sigma(x_{\xi_n},\xi_n)^{\ast}\|_{L^{2}(G)}}
{\|\sigma(x_{\xi_n},\xi_n)\|_{op}}-\|u_{\xi_n}\sigma(\cdot,\xi_{n})\|_{L^{2}(G)}
\nonumber
\\
&\leq& \|u_{\xi_n}\sigma(x_{\xi_n},\xi_n)\|_{L^{2}(G)}-\|u_{\xi_n}\sigma(\cdot,\xi_n)
\|_{L^2(G)}\nonumber\\
&\leq&\|u_{\xi_n}\sigma(x_{\xi_n},\xi_n)-
u_{\xi_n}\sigma(\cdot,\xi_n)\|_{L^2(G)}\nonumber\\
&\leq& \left(\int_{x_{\xi_n} V}\|\sigma(x,\xi_n)-\sigma(x_{\xi_n},\xi_n)\|_{op}^2
\|u_{\xi_n}(x)\|_{\HS}^{2}dx\right)^{1/2}\nonumber\\
&\leq& \epsilon\left(\int_{x_{\xi_n}V}\|u_{\xi_n}(x)\|_{\HS}^2\right)^{1/2}
=\epsilon \|u_{\xi_n}\|_{L^{2}(G)}
=\epsilon \|u\|_{L^2(G)},
\end{eqnarray}
the last inequality following from \eqref{EQ:L2s}.
Therefore,
\begin{eqnarray}\label{EQ:aux4}
\|u\|_{L^2(G)}\|T_{\sigma}-K\|_{\L(L^{2}(G))}
&=& \|u_{\xi_n}\|_{L^2(G)}\|T_{\sigma}-K\|_{\L(L^{2}(G))}\nonumber\\
&\geq&\|(T_{\sigma}-K)u_{\xi_n}\|_{L^{2}(G)}\nonumber\\
&\geq& \|T_{\sigma}u_{\xi_n}\|_{L^{2}(G)}-\|Ku_{\xi_n}\|_{L^{2}(G)}\nonumber\\
&\geq& \|u_{\xi_n}\sigma{(\cdot,\xi_n)}\|_{L^{2}(G)}-2\epsilon\|u\|_{L^2(G)}\nonumber\\
&\geq& \frac{\|u_{\xi_n} \sigma(x_{\xi_n},\xi_n)\sigma(x_{\xi_n},\xi_n)^{\ast}\|_{L^2(G)}}
{\|\sigma(x_{\xi_n},\xi_n)\|_{op}}-3\epsilon \|u\|_{L^2(G)},
\end{eqnarray}
using \eqref{EQ:pf-aux1} and \eqref{EQ:aux1}
for the last inequalities.
Since
$\sigma(x_{\xi_n},\xi_n)\sigma(x_{\xi_n},\xi_n)^{\ast}$ is normal 
there exist a unitary matrix $U$ such that
$$\sigma(x_{\xi_n},\xi_n)\sigma(x_{\xi_n},\xi_n)^{\ast}=U\Lambda U^{\ast},$$ 
where
$$
\Lambda=\left(\begin{matrix}
\lambda_{11} &  0  & \ldots & 0\\
0  &  \lambda_{22} & \ldots & 0\\
\vdots & \vdots & \ddots & \vdots\\
0  &   0       &\ldots & \lambda_{d_{\xi_n}d_{\xi_n}}
\end{matrix}\right),
$$
with $\lambda_{ii}(x_{\xi_n},\xi_n)$ being the eigenvalues of 
$\sigma(x_{\xi_n},\xi_n)\sigma(x_{\xi_n},\xi_n)^{\ast}.$ 
Now let 
$$
\lambda_{mm}(x_{\xi_n},\xi_n)=\min_{1\leq i\leq d_{\xi}}\lambda_{ii}(x_{\xi_n},\xi_n).
$$
We want to show that
\begin{equation} \label{EQ:aux2}
\|u_{\xi_n}(x)U\Lambda U^{\ast}\|^2_{\HS}\geq \lambda^{2}_{mm}\|u_{\xi_n}(x)\|^{2}_{\HS}.\end{equation}
Let  $M=U\Lambda U^{\ast}.$ 
Then $M$ is symmetric and $\lambda_{mm}(x_{\xi_n},\xi_n)$ 
is the minimum eigenvalue of the matrix $M>0$.  
Let $w_{\xi_n}:=u_{\xi_n}(x) M.$
To prove \eqref{EQ:aux2}
 it is enough to show that
$$
\|w_{\xi_n}\|_{\HS}\geq \lambda_{mm}(x_{\xi_n},\xi_n)\|w_{\xi_n}M^{-1}\|_{\HS},
$$ 
where $M^{-1}=U\Lambda^{-1}U^{\ast}.$ 
This is true since $\lambda_{mm}^{-1}(x_{\xi_n},\xi_n)$ is the maximum eigenvalue of 
$M^{-1},$ that is, $\|M^{-1}\|_{op}=\lambda^{-1}_{mm}(x_{\xi_n}, \xi_n).$ 
This proves \eqref{EQ:aux2}.

\medskip
Using \eqref{EQ:aux4} and \eqref{EQ:aux2}, we can estimate 
\begin{eqnarray}
\|u\|_{L^2(G)}\|T_{\sigma}-K\|_{\L(L^{2}(G))}&\geq& 
\frac{\left(\int_{G}\|u_{\xi_n}\sigma(x_{\xi_n},\xi_n)
\sigma(x_{\xi_n},\xi_n)^{\ast}\|^2_{\HS} dx\right)^{1/2}}{\|\sigma(x_{\xi_n},\xi_n)\|_{op}}
-3 \epsilon \|u\|_{L^2(G)}\nonumber\\
&\geq& \frac{\left( \lambda^2_{mm}\int_{G}\|u_{\xi_n}(x)\|^2_{\HS} dx\right)^{1/2}}
{\|\sigma(x_{\xi_n},\xi_n)\|_{op}}-3 \epsilon \|u\|_{L^2(G)}\nonumber\\
&=&\frac{\lambda_{mm}\|u_{\xi_n}\|_{L^2(G)}}{\|\sigma(x_{\xi_n},\xi_n)\|_{op}}
-3 \epsilon \|u\|_{L^2(G)}\nonumber\\
&=&\frac{\lambda_{mm}}{\|\sigma(x_{\xi_n},\xi_n)\|_{op}}\|u\|_{L^2(G)}-3\epsilon \|u\|_{L^2(G)}\nonumber\\
&=& \left(\frac{\lambda_{mm}(x_{\xi_{n}},\xi_{n})}
{\|\sigma(x_{\xi_n},\xi_n)\|_{op}}-3\epsilon\right)\|u\|_{L^2(G)}.
\end{eqnarray}
Now, as $\jp{\xi_n}\rightarrow \infty$, we have
$$\|u\|_{L^2(G)}\|T_{\sigma}-K\|_{\ast}\geq \left(d_{\min}-3\epsilon\right)\|u\|_{L^2(G)},$$ 
that is, for any $\epsilon>0$,
$$
\|T_{\sigma}-K\|_{\ast}\geq d_{\min}-3\epsilon.
$$
Now, using the fact that $\epsilon$ is an arbitrary positive number, 
we have
$$
\|T_{\sigma}-K\|_{\ast}\geq d_{\min}.
$$
This completes the proof of Theorem \ref{THM:Gohberg}.

\begin{proof}[Proof of Lemma \ref{LEM:Tsigma-u}]
Let $x,z\in G.$ 
Let us define 
$$
R(x,z):=\sum_{[\xi]\in\Gh}d_{\xi}\Tr\left(\sigma(x,\xi)\xi(z)\right).
$$
Now we can write
\begin{eqnarray}\label{EQ:aux3}
T_{\sigma}u(x)&=&\sum_{[\xi]\in\Gh}d_{\xi}\Tr\left(\xi(x)\sigma(x,\xi)\widehat{u}(\xi)\right)\nonumber\\
&=&
\int_{G}\sum_{[\xi]\in\Gh}d_{\xi}\Tr\left(\xi(x)\sigma(x,\xi)\xi^{\ast}(y)\right)u(y)\ dy\nonumber\\
&=&
\int_{G}\sum_{[\xi]\in\Gh}d_{\xi}\Tr\left(\sigma(x,\xi)\xi(y^{-1}x)\right)u(y)\ dy\nonumber\\
&=&
\int_{G}R(x,y^{-1}x)\ u(y)\ dy\nonumber\\
&=&
\int_{G}R(x,z)\ u(xz^{-1})\ dz,
\end{eqnarray} 
where
$z=y^{-1}x.$ 
Then from the definition of $u_{\xi_n}(x)=d_{\xi_n}^{-1/2}\xi_{n}(x)u(xx^{-1}_{\xi_n})$  
and \eqref{EQ:aux3} we obtain
\begin{eqnarray}
T_{\sigma}u_{\xi_n}(x)&=&d_{\xi_n}^{-1/2}\int_{G}R(x,z)\xi_{n}(xz^{-1})u(xz^{-1}x^{-1}_{\xi_n})dz\nonumber\\
&=&d_{\xi_n}^{-1/2}\int_{G}R(x,z)\xi_{n}(x)u(xz^{-1}x^{-1}_{\xi_n})\xi_{n}^{\ast}(z)dz. 
\end{eqnarray}
Let us denote
\begin{equation}\label{EQ:vuxi}
v^{x}_{\xi_n}(z^{-1}):=v_{\xi_n}(xz^{-1}):=d_{\xi_n}^{-1/2}\xi_{n}(x)u(xz^{-1}x^{-1}_{\xi_n}),
\end{equation}
so that we have
\begin{equation}\label{EQ:Tsigma-final}
T_{\sigma}u_{\xi_n}(x)=\int_{G}R(x,z)v^{x}_{\xi_n}(z^{-1})\xi_{n}^{\ast}(z)dz.
\end{equation}
For a given collection of $m$ strongly admissible difference operators
$\Delta_1,\ldots,\Delta_m$ 
with the corresponding functions 
$q_1,\ldots,q_m\in C^{\infty}(G)$ with $\Delta_j\widehat f (\xi)=\widehat{q_j f}(\xi)$ 
we have the Taylor expansion formula, see 
\cite{ruzhansky+turunen-book, Ruzhansky-Turunen-Wirth:arxiv},
$$
u(x)=u(e)+\sum_{|\alpha|=1}^{N-1}\frac{1}{\alpha!}q^{\alpha}(x^{-1})\partial^{(\alpha)}u(e)
+{\mathcal{O}}(h(x)^N),$$ 
where $h(x)\rightarrow 0,$ $h(x)$ is the geodesic distance from $x$ and $e,$ and 
$\partial_x^{(\alpha)}$ are some left-invariant differential operators on $G$, and
$q^{\alpha}(x)=q_{1}(x)^{\alpha_1}\cdots q_{m}(x)^{\alpha_m}.$
Assuming that $u$ is sufficiently smooth, from the Taylor expansion formula we have
$$
v^{x}_{\xi_n}(z^{-1})=v^{x}_{\xi_n}(e)+\sum_{|\alpha|=1}^{N-1}\frac{1}{\alpha!}q^{\alpha}(z)\partial^{(\alpha)}v^{x}_{\xi_n}(e)+{\mathcal{O}}(h(z)^N).
$$ 
Then by the left-invariance of $\partial_x^{(\alpha)}$ we obtain
\begin{equation}\label{EQ:pf-Taylor}
v_{\xi_n}(xz^{-1})=v_{\xi_n}(x)+
\sum_{|\alpha|=1}^{N-1}\frac{1}{\alpha!}q^{\alpha}(z)\partial^{(\alpha)}v_{\xi_n}(x)+
{\mathcal{O}}(h(z)^N).
\end{equation}
Using \eqref{EQ:Tsigma-final} and \eqref{EQ:pf-Taylor}, we can now write
\begin{eqnarray}
T_{\sigma}u_{\xi_n}(x)= 
\int_{G}R(x,z)v_{\xi_n}(x)\xi_{n}^{\ast}(z)dz
&+&
\int_{G}R(x,z)\sum_{|\alpha|=1}^{N-1}\frac{1}{\alpha!}
q^{\alpha}(z)\partial^{(\alpha)}v_{\xi_n}(x)\xi_{n}^{\ast}(z)dz\nonumber\\
&+&
\int_{G}R(x,z){\mathcal{O}}(h(z)^N)\xi_{n}^{\ast}(z)dz. \nonumber
\end{eqnarray}
We denote 
$$
I_{1}:=\int_{G}R(x,z)v_{\xi_n}(x)\xi_{n}^{\ast}(z)dz,
$$ 
$$
I_2:=\int_{G}R(x,z)\sum_{|\alpha|=1}^{N-1}\frac{1}{\alpha!}
q^{\alpha}(z)\partial^{(\alpha)}v_{\xi_n}(x)\xi_{n}^{\ast}(z)dz,
$$ 
and 
$$
I_3:=\int_{G}R(x,z){\mathcal{O}}(h(z)^N)\xi_{n}^{\ast}(z)dz.
$$
Then we have
\begin{eqnarray}
I_1&=&\int_{G}R(x,z)v_{\xi_n}(x)\xi_{n}^{\ast}(z)dz\nonumber\\
&=& v_{\xi_n}(x)\sigma(x,\xi_n)\nonumber\\
&=&d_{\xi_n}^{-1/2}\xi_n(x)u(xx^{-1}_{\xi_n})\sigma(x,\xi_n)
=u_{\xi_n}(x)\sigma(x,\xi_n).
\end{eqnarray}
Calculating $I_2$, we have 
\begin{eqnarray}
I_{2}&=&\int_{G}R(x,z)  \partial^{(\alpha)}v_{\xi_n}(x)
\xi_{n}^{\ast}(z)\sum_{|\alpha|=1}^{N-1}\frac{1}{\alpha!}
q^{\alpha}(z)dz\nonumber\\
&=&\sum_{|\alpha|=1}^{N-1}\frac{1}{\alpha!}\partial^{(\alpha)}v_{\xi_n}(x)\int_{G}R(x,z)\xi_{n}^{\ast}(z)q^{\alpha}(z)dz\nonumber\\
&=&\sum_{|\alpha|=1}^{N-1}\frac{1}{\alpha!}\partial^{(\alpha)}v_{\xi_n}(x)\left({\mathcal{F}}q^{\alpha}{\mathcal{F}}^{-1}\sigma\right)(\xi_n)\nonumber\\
&=& \sum_{|\alpha|=1}^{N-1}\frac{1}{\alpha!}
\partial^{(\alpha)}v_{\xi_n}(x)
\Delta_{q^{\alpha}}\sigma(x,\xi_n).
\end{eqnarray}
And calculating $I_3$, we have
\begin{eqnarray}
I_3&=&\int_{G}R(x,z) {\mathcal{O}}(h(z)^N) \xi_{n}^{\ast}(z) dz\nonumber\\
&=&\int_{G}R(x,z)q^{N}(z) \xi_{n}^{\ast}(z)dz\nonumber\\
&=&\Delta_{q^{N}}\sigma(x,\xi_n),
\end{eqnarray}
where we can denote 
$q^N:=\mathcal{O}(h(x)^N)$ so that $q^{N}$ vanishes at $e$ of order $N$,
but keep in mind that it is matrix-valued.
Then we have
\begin{eqnarray}
T_{\sigma}u_{\xi_n}(x)-u_{\xi_n}(x)\sigma(x,\xi_n)
&=& 
\sum_{|\alpha|=1}^{N-1}\frac{1}{\alpha!}
\partial^{(\alpha)}v_{\xi_n}(x)\Delta_{q^{\alpha}}\sigma(x,\xi_n)\nonumber\\
&+& 
\Delta_{q^{(N)}}\sigma(x,\xi_n).
\end{eqnarray}
We denote 
$$
T^{1}_N:=\sum_{|\alpha|=1}^{N-1}\frac{1}{\alpha!}
\partial^{(\alpha)}v_{\xi_n}(x) \Delta_{q^{\alpha}}\sigma(x,\xi_n)
$$ 
and
$$
T^{2}_{N}:= 
\Delta_{q^{N}}\sigma(x,\xi_n). 
$$
We can estimate
$$
\|T^{1}_N(x)\|_{\HS}\leq 
\|\Delta_{q^{\alpha}}\sigma(x,\xi_n)\|_{op}
\sum_{0<|\alpha|\leq N}\frac{1}{\alpha!}\|\partial^{(\alpha)}_zv^{x}_{\xi_n}(x)\|_{\HS}
$$ 
where
$z\in G.$
So, using \eqref{EQ:vuxi} and \eqref{EQ:symbol-class}, 
for some operator $\tilde{\partial}^{(\alpha)}_{z}$ we have

\begin{eqnarray}
\|T^{1}_N(x)\|_{\HS}
&\leq& 
C \left(\sum_{0<|\alpha|\leq N}|\tilde{\partial}^{(\alpha)}_{z}
u(x\cdot x_{\xi^{-1}_n})|\right) d^{-1/2}_{\xi_n}\|\xi_n(x)\|_{\HS}
\langle\xi_n\rangle^{-|\alpha|}\nonumber\\
&\leq& C \sum_{0<|\alpha|\leq N}\|u\|_{H^{|\alpha|}}\langle\xi_n\rangle^{-|\alpha|}\nonumber\\
&\leq& C^{\prime} \langle\xi_n\rangle^{-1},
\end{eqnarray} 
where
$1\leq |\alpha|\leq N,$ $N$ is fixed and $\|\cdot\|_{H^{|\alpha|}}$ 
is the Sobolev norm.
Similarly, 
$$
\|T^{2}_{N}\|_{\HS}\leq C\langle\xi_n\rangle^{-N}.
$$
Therefore,
as $\langle\xi_n\rangle\rightarrow \infty$ we have 
$\|T^{1}_{N}(x)\|_{\HS}\rightarrow 0$ and 
$\|T^{2}_N(x)\|_{\HS}\rightarrow 0$
for all $x\in G$ which gives 
$$
\|T^{1}_{N}\|^{2}_{L^{2}(G)}=\int_{G}\|T^{1}_N(x)\|^{2}_{\HS}dx\rightarrow 0
$$ 
as 
$\jp{\xi_n}\rightarrow \infty$ and, similarly, 
$\|T^{2}_N\|_{L^2(G)}\rightarrow 0.$ 
This implies
$$
 \|T_{\sigma}u_{\xi_n}-u_{\xi_n}\sigma(\cdot,\xi_n)\|_{L^{2}(G)}\rightarrow 0
$$ 
as $\jp{\xi_n}\rightarrow \infty,$ and we note that it is sufficient to take
$N=1$ in the above argument. 
\end{proof} 

\begin{rem} \label{REM:Gohberg}
Looking at what we have used in the proof, we note that we have
(with the same proof and $N=1$) the following extension of the
Gohberg lemma without making an assumption that
the operator belongs to $\Psi^{0}(G)$, namely:

\medskip
{\em Let $T_{\sigma}:L^{2}(G)\to L^{2}(G)$ be a bounded operator with
the matrix symbol $\sigma(x,\xi)$ satisfying, for some $\rho>0$,
$$
\|\sigma(x,\xi)\|_{op}\leq C,\quad
\|\partial_{x}\sigma(x,\xi)\|_{op}\leq C,
\quad
\|\Delta_{q}\sigma(x,\xi)\|_{op}\leq C\jp{\xi}^{-\rho}
$$
for all $q\in C^{\infty}(G)$ with $q(e)=0$, and all
$x\in G$ and $[\xi]\in\Gh$.  Then the conclusion of 
the Gohberg lemma in Theorem \ref{THM:Gohberg} remains true,
namely, the estimate \eqref{EQ:Gohberg-est} holds for all
compact operators $K$ on $L^2(G)$.}
\end{rem}

\section{Proof of Theorem \ref{THM:Thmpm}}
\label{SEC:other-proofs}

We first recall the following theorem which is known as the Atkinson theorem 
which gives another equivalent definition of Fredholm operators.

\begin{thm}\label{THM:Atkinson}
Let $A$ be a closed linear operator from a complex Banach space $X$ into a
complex Banach space $Y$ with a dense domain $D(A)$. Then $A$ is Fredholm if and only if
we can find a bounded linear operator $B : Y \rightarrow X$, a compact operator 
$K_1 : X \rightarrow X$ and
a compact operator $K_2 : Y \rightarrow Y$ such that 
$BA = I + K_1$ on $D(A)$ and $AB = I + K_2$ on
$Y$.
\end{thm}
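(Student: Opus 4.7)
The plan is to prove both directions of the biconditional separately, using the standard parametrix construction in the forward direction and the Riesz--Schauder theory of compact operators in the backward direction.

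For the forward direction, assume $A$ is Fredholm. Then $\ker A$ is a finite-dimensional (hence topologically complemented) closed subspace of $X$, and $\operatorname{ran} A$ is a closed subspace of $Y$ of finite codimension, so it too has a closed (finite-dimensional) complement. Fix decompositions $X = \ker A \oplus X_1$ and $Y = \operatorname{ran} A \oplus Y_1$, and let $P\colon Y \to \operatorname{ran} A$ be the projection along $Y_1$. The restriction $A_1 := A|_{X_1 \cap D(A)}$ is a closed linear bijection onto $\operatorname{ran} A$ (closedness follows from closedness of $A$ and of $X_1$), so its inverse $A_1^{-1}\colon \operatorname{ran} A \to X$ is defined on a Banach space and has closed graph, hence is bounded by the closed graph theorem. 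Setting $B := A_1^{-1} \circ P\colon Y \to X$, a direct check shows $BA = I - \pi_{\ker A}$ on $D(A)$ and $AB = I - \pi_{Y_1}$ on $Y$, where $\pi_{\ker A}$ and $\pi_{Y_1}$ are the projections associated to the fixed decompositions; these projections are of finite rank and therefore compact, so we may take $K_1 := -\pi_{\ker A}$ and $K_2 := -\pi_{Y_1}$.

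For the backward direction, suppose $B$, $K_1$, $K_2$ as in the statement exist. If $x \in \ker A$, then $x = BAx - K_1 x = -K_1 x$, so $\ker A \subseteq \ker(I + K_1)$; the latter is finite-dimensional by the Riesz--Schauder theory of the compact operator $K_1$. For the range, $\operatorname{ran} A \supseteq \operatorname{ran}(AB) = \operatorname{ran}(I + K_2)$. By Riesz--Schauder, $I + K_2$ is Fredholm on $Y$, so $\operatorname{ran}(I + K_2)$ is closed and of finite codimension. A linear subspace of $Y$ containing a closed subspace of finite codimension is itself closed and of finite codimension (write it as the sum of that subspace with a finite-dimensional supplement, and use that the algebraic sum of a closed subspace and a finite-dimensional subspace is closed). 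Thus $\operatorname{ran} A$ is closed with finite codimension, and $A$ is Fredholm.

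The main technical obstacle is the closed graph argument in the forward direction: one must be careful that although the domain $X_1 \cap D(A)$ of $A_1$ need not be complete in general, the inverse $A_1^{-1}$ is nevertheless defined on the \emph{entire} Banach space $\operatorname{ran} A$, and closedness of its graph is inherited from closedness of $A_1$. Once this is in place, the remainder of the proof is a routine assembly of the Riesz--Schauder theory for $I + K_1$ and $I + K_2$ together with elementary Banach space topology.
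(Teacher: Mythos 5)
The paper does not prove this statement: Theorem~\ref{THM:Atkinson} is explicitly \emph{recalled} as the classical Atkinson theorem and used as a black box in the proof of Theorem~\ref{THM:Thmpm}, so there is no in-paper proof to compare against. Your proposal supplies the standard textbook proof, and it is correct. In the forward direction the parametrix construction $B = A_1^{-1}\circ P$ is exactly right, the computations $BA = I - \pi_{\ker A}$ and $AB = I - \pi_{Y_1}$ check out, and you correctly isolate the one genuinely delicate point: the closed graph theorem is applied to $A_1^{-1}\colon \operatorname{ran} A \to X$, whose domain $\operatorname{ran} A$ is a closed subspace of $Y$ and hence a Banach space, even though the domain $X_1\cap D(A)$ of $A_1$ itself need not be. In the backward direction the Riesz--Schauder argument for $\ker A \subseteq \ker(I+K_1)$ and $\operatorname{ran} A \supseteq \operatorname{ran}(I+K_2)$ is correct, and the auxiliary lemma (a subspace containing a closed finite-codimensional subspace is itself closed and of finite codimension) is the right tool; one can see it by passing to the finite-dimensional quotient $Y/\operatorname{ran}(I+K_2)$ and pulling back. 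In short: there is no gap, and your argument is a self-contained proof of a result the paper merely cites.
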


We recall that the Wolf spectrum $\Sigma_{w}(A)$ of $A$ is defined by 
$\Sigma_{w}(A):=\mathbb{C}\backslash\Phi_{w}(A),$ 
where 
$$
\Phi_{w}(A)=\left\{\lambda\in\mathbb{C}: A-\lambda I \textrm{ is Fredholm}\right\}.
$$
Clearly, we have $\Sigma_{w}(A)\subseteq \Sigma_{ess}(A)\subseteq \Sigma(A)$.

\begin{proof}[Proof of Theorem \ref{PROP:1}]
Let $\lambda\in \mathbb{C}$ be such that $|\lambda| > d_{\max}$.
Then there exists $\epsilon>0$ such that 
$$|\lambda|>d_{\max}+\epsilon.$$
Now, by the definition of $d_{\max}$ in Theorem
\ref{THM:Thmpm}, we have for some $R>0$ and
for all $\jp{\xi}\geq R$ that  
$$\sup_{\jp{\xi}\geq R}\{\sup_{x\in G}\|\sigma(x,\xi)\|_{op}\}\leq ( d_{\max}+\epsilon/2).$$
Then for $\jp{\xi}\geq R$, we can estimate
\begin{multline}
\|(\sigma(x,\xi)-\lambda I)^{-1}\|_{op}
\leq \sum_{k=1}^{\infty}\lambda^{-k-1}\|\sigma(x,\xi)^{k}\|_{op} \\
\leq \sum_{k=1}^{\infty} \frac{(d_{\max}+\epsilon/2)^{k}}{|\lambda|^{k+1}} 
 \leq \sum_{k=1}^{\infty} \frac{(d_{\max}+\epsilon/2)^{k}}{{(d_{\max}+\epsilon)}^{k+1}}
 \leq C \sum_{k=1}^{\infty} \frac {(d_{\max}+\epsilon/2)^{k}}{{(d_{\max}+\epsilon)}^{k}} 
 < \infty.
\end{multline}
Hence from 
\eqref{EQ:ellipticity} it follows that 
the operator $T_{\sigma}-\lambda I$ is elliptic and hence it is a Fredholm operator from 
$L^{2}(G)\rightarrow L^{2}(G)$, see e.g. 
\cite[Section 19.5]{Hormander:BK-Vol-III}.
Thus 
$$
\left\{ \lambda\in {\mathbb{C}}: |\lambda|>d_{\max} \right\}\subseteq \Phi_{w}(T_{\sigma}),
$$ 
which implies that 
$$
\Sigma_{w}(T_{\sigma})\subseteq\left\{\lambda\in \mathbb{C}: |\lambda|\leq d_{\max}\right\}.
$$ 
Since $\{ \lambda\in {\mathbb{C}}: |\lambda|> d_{\max}\}$ is a connected component of 
$\Phi_{w}(T_{\sigma})$ it follows that $i(T_{\sigma}-\lambda I)$ is constant for all 
$\lambda$ in $\{\lambda\in {\mathbb{C}}: |\lambda|> d_{\max}\}.$ 
Now again 
$$
\Phi(T_{\sigma})\cap \{\lambda\in {\mathbb{C}}: |\lambda|> d_{\max}\}\neq \emptyset.$$ 
Therefore $i(T_{\sigma}-\lambda I)=0$ for all $\{\lambda\in \mathbb{C}: |\lambda|> d_{\max}\}.$ 
This implies 
$$
\Sigma_{ess}(T_{\sigma})\subseteq \{\lambda\in \mathbb{C}: |\lambda|\leq d_{\max}\},
$$
completing the proof of \eqref{EQ:ess}.

\medskip
To prove the last part of Theorem \ref{THM:Thmpm}, we start by
recalling the definition of the Calkin algebra. Let $\L(L^2(G))$ and 
$\K(L^2(G))$ be respectively the $C^{\ast}$ algebra of bounded linear operators on 
$L^2(G)$ and  the ideal of compact operators on $L^{2}(G).$ 
The Calkin algebra, $\L(L^{2}(G))/\K(L^{2}(G))$, is a $\ast$-algebra. 
The product and the involution are defined here as
$$[A][B]:=[AB]$$ 
and 
$$[A]^{\ast}:=[A^{\ast}],$$ 
for all $A$ and $B$ in $\L(L^{2}(G)).$  
Let $[A]$ and $[B]$ be members of $\L(L^{2}(G))/\K(L^{2}(G))$. 
Then 
$$[A]=[B] \Longleftrightarrow  A-B\in \K(L^{2}(G)).$$ 
The norm $\|\cdot\|_C$ in $\L\left(L^2(G))/\K(L^2(G)\right)$ is given by 
$$\|[A]\|_C:=\inf_{K\in \K(L^2(G))}\|A-K\|_{\L(L^{2}(G))}, \quad[A] \in \L\left(L^2(G))/\K(L^2(G)\right).$$
By using the Calkin algebra the Gohberg lemma Theorem
\ref{THM:Gohberg} can be reformulated as the inequality
$$
\|[T_{\sigma}]\|_{C}\geq d_{\min}.
$$

\medskip
We now prove the last part of Theorem \ref{THM:Thmpm}.
We assume that $d_{\max}=0$ and observe that  $T_{\sigma}$ is compact if and only if 
$[T_{\sigma}]=0$ in the Calkin algebra $\L(L^2(G))/\K(L^2(G)).$   
We also observe that the operator $T_{\sigma}$ is essentially normal on 
$L^{2}(G)$, i.e. $T_{\sigma}T_{\sigma}^{*}-T_{\sigma}^{*}T_{\sigma}$ is compact.
Indeed, this is the operator of order $-1$ so the compactness follows from
the compactness of the embedding $H^{1}\hookrightarrow L^{2}.$
Consequently, $[T_{\sigma}]$ is normal in  $\L(L^2(G))/\K(L^2(G)),$ 
and, therefore,
$$
r(T_{\sigma})=\|[T_{\sigma}]\|_{C},
$$ 
where $r(T_{\sigma})$ is the spectral radius of $[T_{\sigma}].$ 
On the other hand we know that $\Sigma_{ess}(T_{\sigma})\subset\{0\}$ by 
the first part of Theorem \ref{PROP:1} in \eqref{EQ:ess}. This implies that 
$T_{\sigma}-\lambda I$ is Fredholm for $\lambda\neq 0.$ 
So using Atkinson's Theorem \ref{THM:Atkinson} 
this implies that there exists a bounded operator $B$ such that 
$$(T_{\sigma}-\lambda I)B=I+K,$$ 
where $K$ is a compact operator. 
That is, for $\lambda\neq 0$, $[(T_{\sigma}-\lambda I)]$ is invertible, 
which implies that for $\lambda\neq 0$, we have $\lambda\notin \Sigma([T_{\sigma}]),$ 
the spectrum of $[T_{\sigma}]$. So $\Sigma([T_{\sigma}])\subseteq \{0\}.$ 
Consequently,
$$\|[T_{\sigma}]\|_{C}=r(T_{\sigma})=0.$$ 
Therefore $[T_{\sigma}]=0,$ 
and hence $T_{\sigma}$ is compact, completing the proof.
\end{proof}



\end{document}